\newcommand{\R}{\mathbb{R}}
\newcommand{\Z}{\mathbb{Z}}
\newcommand{\T}{\mathbb{T}}
\newcommand{\tor}{\mathbb{T}}
\newcommand{\qq}{\mathbf{1}}
\newcommand{\qi}{\mathbf{i}}
\newcommand{\qj}{\mathbf{j}}
\newcommand{\qk}{\mathbf{k}}
\newcommand{\pp}{\mathrm{pr}}
\DeclareMathOperator{\aut}{Aut}
\newtheorem{theorem}{Theorem}[section]
\newtheorem{corollary}[theorem]{Corollary}
\theoremstyle{definition}
\newtheorem{definition}[theorem]{Definition}
\theoremstyle{remark}
\numberwithin{equation}{section}
\title{Examples of $3$-quasi-Sasakian manifolds}
\author[B. Cappelletti Montano]{Beniamino Cappelletti-Montano}
 \address{Dipartimento di Matematica e Informatica, Universt\`a degli Studi di
 Cagliari, Via Ospedale 72, 09124 Cagliari}
 \email{b.cappellettimontano@gmail.com}
\author[A. De Nicola]{Antonio De Nicola}
 \address{CMUC, Department of Mathematics, University of Coimbra, 3001-501 Coimbra, Portugal}
 \email{antondenicola@gmail.com}
\author[I. Yudin]{Ivan Yudin}
 \address{CMUC, Department of Mathematics, University of Coimbra, 3001-501 Coimbra, Portugal}
 \email{yudin@mat.uc.pt}
\subjclass[2000]{Primary 53C25, 53D35 }
\keywords{quasi-Sasakian, 3-quasi-Sasakian, 3-Sasakian,
3-cosymplectic}
\thanks{Research partially supported by CMUC, funded by the European program COMPETE/FEDER,
by FCT (Portugal) grants PEst-C/MAT/UI0324/2011 (A.D.N. and I.Y.),  by Prin 2010/11 -- Variet\`{a} reali e complesse:
geometria, topologia e analisi armonica -- Italy (B.C.M.) and by the exploratory research project FCT IF/00016/2013 in the frame of Programa Investigador  (I.Y.).}
\begin{document}

\dedicatory{\emph{Dedicated to Prof. Anna Maria Pastore on occasion of her 70th birthday}}

\begin{abstract}
We provide a general method to construct examples of quasi-Sasakian $3$-structures on a $(4n+3)$-dimensional manifold. Moreover, among this class, we give the first explicit example of a compact $3$-quasi-Sasakian manifold which is not the global product of a $3$-Sasakian manifold and a hyper-K\"{a}hler manifold.
\end{abstract}

\maketitle

\section{Introduction}

The class of  quasi-Sasakian manifolds was introduced by Blair in \cite{blair0}, and then studied by several authors (e.g. \cite{tanno}, \cite{olszak2}, \cite{kanemaki1}) in order to unify the most important classes of almost contact metric manifolds, namely the Sasakian and coK\"{a}hler ones, which are   quasi-Sasakian manifolds of maximal and minimal rank, respectively. Moreover any quasi-Sasakian manifold is canonically endowed with a transversely K\"{a}hler foliation, so that they can be thought as an odd-dimensional analogue of K\"{a}hler manifolds.

When on a smooth manifold $M$ there are defined three distinct quasi-Sasakian
structures, with the same compatible metric, which are related to each other by
certain relations similar to the quaternionic identities, one says that
$M$ is a $3$-quasi-Sasakian manifold  (see Section \ref{preliminari} for
the precise definition). The class  of $3$-quasi-Sasakian manifolds was extensively studied a few years ago
in \cite{CapDenDil1} and \cite{CapDenDil2}, where several properties on
$3$-quasi-Sasakian manifolds, which do not hold for a general quasi-Sasakian
structure, were proved. In particular, it was proved that the aforementioned quaternionic-like
structure forces any $3$-quasi-Sasakian manifold of non-maximal rank $4l+3$ to
be the local Riemannian product of a $3$-$c$-Sasakian manifold and a
hyper-K\"{a}hler manifold. Therefore a natural question arises: are there
examples of $3$-quasi-Sasakian manifolds which are not the global product of a
$3$-$c$-Sasakian manifold and a hyper-K\"{a}hler manifolds? In this article we
give an affirmative answer to this problem. We present a general procedure to
produce a large class of examples, and we prove that the $11$-dimensional
3-quasi-Sasakian manifold in this class  is not a global product of $3$-Sasakian
and hyper-K\"ahler manifolds.

\bigskip

All manifolds considered in this paper will be assumed to be smooth and connected. For wedge product, exterior derivative and interior product we use the conventions as in Goldberg's book \cite{Go}.

\section{Preliminaries}\label{preliminari}
We start with a few preliminaries on almost contact metric manifolds, referring the reader  to the monographs \cite{blair1}, \cite{galicki} and to the survey \cite{CapDenYud0} for further details.

An \emph{almost contact metric structure} on a $(2n+1)$-dimensional
manifold $M$ is the data of a  $(1,1)$-tensor $\phi$, a vector field $\xi$, called  \emph{Reeb vector field},
a $1$-form $\eta$ and a Riemannian metric $g$ satisfying
\begin{gather}\label{defacm}
\phi^2=-I+\eta\otimes\xi, \ \ \eta\left(\xi\right)=1, \ \  g\left(\phi X,\phi Y\right) =
g\left(X,Y\right)-\eta\left(X\right)\eta\left(Y\right),
\end{gather}
for all $X,Y\in\Gamma\left(TM\right)$, where $I$ denotes the identity mapping on $TM$. From \eqref{defacm} it follows that $g(X,\phi Y)=-g(\phi X,Y)$, so that we can define the $2$-form $\Phi$ on $M$
by $\Phi\left(X,Y\right)=g\left(X,\phi Y\right)$, which is called the
\emph{fundamental $2$-form} of the almost contact metric manifold
$\left(M,\phi,\xi,\eta,g\right)$.

The manifold is said to be \emph{normal} if the tensor field
$N_{\phi}:=[\phi,\phi]_{FN}+2d\eta\otimes\xi$ vanishes identically.
    Normal almost contact metric manifolds such that both $\eta$ and $\Phi$ are
closed are called \emph{coK\"{a}hler manifolds} and those such that $d\eta=c \Phi$ are called
\emph{c-Sasakian manifolds}, where $c$ is a non-zero real number (for $c=2$ one obtains the well-known \emph{Sasakian manifolds}).

    The notion of  quasi-Sasakian structure was introduced by Blair in his Ph.D. thesis in order to unify those of Sasakian and coK\"{a}hler structures. A \emph{quasi-Sasakian manifold} is defined as a
normal almost contact metric manifold whose fundamental $2$-form
is closed. A quasi-Sasakian manifold $M$ is said to be of \emph{rank $2p$} (for some $p\leq n$) if
$\left(d\eta\right)^p\neq 0$ and $\eta\wedge\left(d\eta\right)^p=0$ on $M$,
and to be of \emph{rank $2p+1$} if $\eta\wedge\left(d\eta\right)^p\neq 0$ and
$\left(d\eta\right)^{p+1}=0$  on $M$ (cf. \cite{blair0,tanno}).
Blair proved that there are no quasi-Sasakian manifolds of even rank.
Just like Blair and Tanno implicitly did, we will only consider quasi-Sasakian manifolds of constant (odd) rank.
If the rank of $M$ is $2p+1$, then the module $\Gamma(TM)$ of vector
fields over $M$ splits into two submodules as follows:
$\Gamma(TM)={\mathcal E}^{2p+1}\oplus{\mathcal E}^{2q}$, $p+q=n$, where
\begin{equation*}
{\mathcal E}^{2q}=\{X\in\Gamma(TM)\; | \; i_X d\eta=0 \mbox{ and } i_X \eta=0\}
\end{equation*}
and  ${\mathcal E}^{2p+1}={\mathcal
E}^{2p}\oplus\left\langle\xi\right\rangle$, ${\mathcal E}^{2p}$ being
the orthogonal complement of ${\mathcal
E}^{2q}\oplus\left\langle\xi\right\rangle$ in
$\Gamma\left(TM\right)$. These modules satisfy $\phi {\mathcal
E}^{2p}={\mathcal E}^{2p}$ and $\phi {\mathcal E}^{2q}={\mathcal E}^{2q}$ (\cite{tanno}).

\medskip

We now come to the main topic of our paper, i.e. $3$-quasi-Sasakian geometry,
which is framed into the more general setting of almost $3$-contact geometry.
An  \emph{almost contact metric $3$-structure} on a smooth manifold $M$  is the data of
three almost contact structures $\left(\phi_1,\xi_1,\eta_1\right)$,
$\left(\phi_2,\xi_2,\eta_2\right)$,
$\left(\phi_3,\xi_3,\eta_3\right)$ satisfying the following
relations, for any even permutation
$\left(\alpha,\beta,\gamma\right)$ of $(1,2,3)$,
\begin{equation} \label{3-sasaki}
\begin{split}
\phi_\gamma=\phi_{\alpha}\phi_{\beta}-\eta_{\beta}\otimes\xi_{\alpha}=-\phi_{\beta}\phi_{\alpha}+\eta_{\alpha}\otimes\xi_{\beta},\quad\\
\xi_{\gamma}=\phi_{\alpha}\xi_{\beta}=-\phi_{\beta}\xi_{\alpha}, \ \ \eta_{\gamma}=
\eta_{\alpha}\circ\phi_{\beta}=-\eta_{\beta}\circ\phi_{\alpha},
\end{split}
\end{equation}
and a Riemannian metric $g$ compatible with each of them. This definition was introduced, independently, by Kuo (\cite{kuo}) and Udriste (\cite{udriste}). In particular, they proved that  necessarily $\dim(M)=4n+3$. It is well
known that in any almost $3$-contact metric manifold the Reeb vector
fields $\xi_1,\xi_2,\xi_3$ are orthonormal with respect to the
compatible metric $g$ and that the structural group of the tangent
bundle is reducible to $Sp\left(n\right)\times I_3$.

Moreover, by putting ${\mathcal{H}}=\bigcap_{\alpha=1}^{3}\ker\left(\eta_\alpha\right)$ one
obtains a $4n$-dimensional \emph{horizontal distribution} on $M$ and the tangent
bundle splits as the orthogonal sum $TM={\mathcal{H}}\oplus{\mathcal{V}}$, where
${\mathcal V}=\left\langle\xi_1,\xi_2,\xi_3\right\rangle$ is the \emph{vertical distribution}.

\begin{definition}
A \emph{quasi-Sasakian $3$-structure} is an almost
contact metric $3$-structure $\{(\phi_\alpha,\xi_\alpha,\eta_\alpha,g)\}_{\alpha\in\{1,2,3\}}$ on a smooth manifold $M$
such that each almost contact metric structure is quasi-Sasakian. The manifold $M$ will be called a \emph{$3$-quasi-Sasakian manifold}.
\end{definition}
In particular, a quasi-Sasakian $3$-structure such that each structure is Sasakian is called a \emph{Sasakian $3$-structure} and the manifold is said to be a \emph{$3$-Sasakian manifold}. A quasi-Sasakian $3$-structure such that each structure is coK\"{a}hler is called a \emph{cosymplectic $3$-structure} and the manifold is said to be a \emph{$3$-cosymplectic manifold}.


Let us collect some known results on $3$-quasi-Sasakian manifolds. The following theorem combines the results obtained in Theorems 3.4 and 4.2 of \cite{CapDenDil1}, and Theorem 3.7 of \cite{CapDenDil2}.

\begin{theorem}\label{classification}
Let $(M,\phi_\alpha,\xi_\alpha,\eta_\alpha,g)$ be a
$3$-quasi-Sasakian manifold. Then the $3$-dimensional distribution
$\mathcal V$ generated by $\xi_1$, $\xi_2$, $\xi_3$ is integrable.
Moreover, $\mathcal V$ defines a  Riemannian
foliation with totally geodesic leaves on $M$, and for any even permutation
$(\alpha,\beta,\gamma)$ of $(1,2,3)$ and for some $c\in \mathbb R$
\begin{equation*}
\left[\xi_\alpha,\xi_\beta\right]=c\xi_\gamma.
\end{equation*}
Moreover, $c=0$ if and only if the structure is $3$-cosymplectic.
\end{theorem}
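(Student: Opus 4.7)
The plan is to reduce everything to the Killing condition on each Reeb vector field. First I would invoke the standard single-structure fact that the Reeb vector field of a quasi-Sasakian manifold is Killing and satisfies $\lie_{\xi_\alpha}\phi_\alpha=0$, $\lie_{\xi_\alpha}\eta_\alpha=0$, and $i_{\xi_\alpha}d\eta_\alpha=0$. Applied to each of the three structures, this produces three unit-length Killing vector fields whose covariant derivatives $\nabla\xi_\alpha$ are skew, and in particular $\nabla_{\xi_\alpha}\xi_\alpha=0$.

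For the integrability of $\mathcal V$ I would show that $g([\xi_\alpha,\xi_\beta],X)=0$ for every $X\in\Gamma(\mathcal H)$. Writing $[\xi_\alpha,\xi_\beta]=\nabla_{\xi_\alpha}\xi_\beta-\nabla_{\xi_\beta}\xi_\alpha$ and exploiting the skew-symmetry of $\nabla\xi_\alpha$, the pairing against a horizontal $X$ reduces to expressions of the form $d\eta_\delta(\xi_\epsilon,X)$. Combining $i_{\xi_\delta}d\eta_\delta=0$ with the identities $\xi_\beta=\phi_\gamma\xi_\alpha$ and $\lie_{\xi_\alpha}\phi_\alpha=0$ lets one rewrite the mixed terms $d\eta_\alpha(\xi_\beta,\cdot)$ as expressions that vanish on $\mathcal H$. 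Once $[\xi_\alpha,\xi_\beta]\in\Gamma(\mathcal V)$ is established, I would decompose it in the orthonormal basis $\xi_1,\xi_2,\xi_3$ and use the Killing identity together with $g(\xi_\delta,\xi_\epsilon)=\delta_{\delta\epsilon}$ to kill the $\xi_\alpha$- and $\xi_\beta$-components, leaving $[\xi_\alpha,\xi_\beta]=f_{\alpha\beta}\xi_\gamma$. Applying $\phi_\alpha$ to $\xi_\gamma=\phi_\alpha\xi_\beta$ and using $\lie_{\xi_\alpha}\phi_\alpha=0$ shows the three functions $f_{\alpha\beta}$ coincide into a single function $c$; then the Jacobi identity on $\{\xi_1,\xi_2,\xi_3\}$ together with $[X,\xi_\alpha]\in\Gamma(\mathcal H)$ for $X\in\Gamma(\mathcal H)$ (again from Killing) forces both $\xi_\delta(c)=0$ and $X(c)=0$, hence $c\in\R$.

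For the transverse geometry, the Killing condition on each $\xi_\alpha$ immediately gives $\lie_V g|_{\mathcal H\times\mathcal H}=0$ for every $V\in\Gamma(\mathcal V)$, which is the bundle-like condition defining a Riemannian foliation. For totally geodesic leaves one checks that $g(\nabla_{\xi_\alpha}\xi_\beta,X)=0$ for $X\in\Gamma(\mathcal H)$, which follows from the Killing identity applied to $\xi_\alpha$ together with $[\xi_\alpha,\xi_\beta]\in\Gamma(\mathcal V)$. Finally, the equivalence that $c=0$ if and only if the structure is 3-cosymplectic follows from $\eta_\gamma([\xi_\alpha,\xi_\beta])=-d\eta_\gamma(\xi_\alpha,\xi_\beta)$: in the 3-cosymplectic case all $d\eta_\alpha$ vanish so $c=0$, and conversely $c=0$ combined with $i_{\xi_\delta}d\eta_\delta=0$ and the quaternionic relations forces $d\eta_\alpha$ to vanish on every pair from $\mathcal V\cup\mathcal H$, making each structure coK\"{a}hler. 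The main obstacle I expect is the integrability step: the required mixed identities like $d\eta_\alpha(\xi_\beta,\cdot)=\pm d\eta_\beta(\xi_\alpha,\cdot)$ are genuine 3-structure consequences that must be derived by carefully intertwining all three normality conditions with the closedness of the three fundamental 2-forms.
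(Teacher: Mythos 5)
First, a contextual point: the paper does not prove this theorem at all --- it is imported from Theorems 3.4 and 4.2 of \cite{CapDenDil1} and Theorem 3.7 of \cite{CapDenDil2} --- so there is no in-paper argument to compare you against. Judged on its own merits, your skeleton is the right one. Each $\xi_\alpha$ being Killing with $\lie_{\xi_\alpha}\phi_\alpha=0$ and $i_{\xi_\alpha}d\eta_\alpha=0$ is a correct single-structure input; from it the bundle-like property of $g$, the identity $2g(\nabla_{\xi_\alpha}\xi_\beta,X)=g([\xi_\alpha,\xi_\beta],X)$ for horizontal $X$ (hence totally geodesic leaves once integrability is known), the vanishing of the $\xi_\alpha$- and $\xi_\beta$-components of $[\xi_\alpha,\xi_\beta]$, the equality of the three coefficient functions via $\lie_{\xi_\alpha}(\phi_\alpha\xi_\beta)=\phi_\alpha[\xi_\alpha,\xi_\beta]$, and the constancy of $c$ via the Jacobi identity all go through as you describe.

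The genuine gap is exactly the step you flag at the end, and flagging it does not fill it. To get $[\xi_\alpha,\xi_\beta]\in\Gamma(\mathcal V)$ you must prove $d\eta_\beta(\xi_\alpha,X)-d\eta_\alpha(\xi_\beta,X)=0$ for all horizontal $X$ (and for totally geodesic leaves you really want each mixed term $i_{\xi_\beta}d\eta_\alpha|_{\mathcal H}$ to vanish separately). The tools you invoke do not close this: $i_{\xi_\alpha}d\eta_\alpha=0$ says nothing about $i_{\xi_\beta}d\eta_\alpha$, and the hybridness $d\eta_\alpha(\phi_\alpha\cdot,\phi_\alpha\cdot)=d\eta_\alpha(\cdot,\cdot)$ combined with $\xi_\beta=-\phi_\alpha\xi_\gamma$ only converts $d\eta_\alpha(\xi_\beta,X)$ into $d\eta_\alpha(\xi_\gamma,\phi_\alpha X)$, i.e.\ it permutes the mixed terms among themselves rather than killing them. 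One genuinely has to expand the normality of the \emph{other} structures, $N_{\phi_\beta}(\xi_\alpha,X)=0$ and $N_{\phi_\gamma}(\xi_\alpha,X)=0$, against the closedness of the $\Phi_\alpha$ to extract the vanishing of the mixed contractions; that computation is the substance of the cited theorems and is absent from your proposal. A second, smaller gap of the same nature: in the direction $c=0\Rightarrow$ $3$-cosymplectic, knowing that all vertical contractions $i_Vd\eta_\alpha$ vanish does not yet give $d\eta_\alpha=0$; you still need $d\eta_\alpha|_{\mathcal H\times\mathcal H}=0$, which is essentially the rank-one assertion of Theorem \ref{rango} and again requires an argument intertwining the three structures, not merely ``the quaternionic relations.''
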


Using Theorem \ref{classification} we may divide
$3$-quasi-Sasakian manifolds in two classes according
to the behaviour of the leaves of the
foliation $\mathcal V$: those $3$-quasi-Sasakian manifolds for which
each leaf of $\mathcal V$ is locally $SO\left(3\right)$ (or
$SU\left(2\right)$) (which corresponds to take in Theorem
\ref{classification} the constant $c\neq 0$), and those for which
each leaf of $\mathcal V$ is locally an abelian group (this corresponds
to the case $c=0$).

\section{Basic properties of $3$-quasi-Sasakian manifolds}\label{ranksection}
For a $3$-quasi-Sasakian manifold one can consider  the ranks, a priori distinct, of the three quasi-Sasakian  structures $(\phi_1,\xi_1,\eta_1,g)$, $(\phi_2,\xi_2,\eta_2,g)$, $(\phi_3,\xi_3,\eta_3,g)$.
The following theorem assures that these three ranks coincide.

\begin{theorem}[\cite{CapDenDil1},\cite{CapDenDil2}]\label{rango}
Let $(M,\phi_\alpha,\xi_\alpha,\eta_\alpha,g)$ be a
$3$-quasi-Sasakian manifold of dimension $4n+3$. Then the $1$-forms $\eta_1$, $\eta_2$
and $\eta_3$ have the same rank $4l+3$, for some integer $l\leq n$, or $1$ according to
$\left[\xi_\alpha,\xi_\beta\right]=c\xi_\gamma$ with $c\neq 0$ or
$c=0$, respectively.
\end{theorem}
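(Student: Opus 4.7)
The plan is to dichotomize using Theorem~\ref{classification}: if $c=0$, the structure is $3$-cosymplectic, each $(\phi_\alpha,\xi_\alpha,\eta_\alpha,g)$ is coKähler, $d\eta_\alpha\equiv 0$, and every $\eta_\alpha$ has rank $1$. The substantive case is $c\neq 0$, which I treat below.

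Assume $c\neq 0$. Applying the Cartan formula to the relation $[\xi_\alpha,\xi_\beta]=c\xi_\gamma$ from Theorem~\ref{classification}, and using $\eta_\gamma(\xi_\alpha)=\eta_\gamma(\xi_\beta)=0$, one finds that $d\eta_\gamma(\xi_\alpha,\xi_\beta)$ is a nonzero multiple of $c$; in particular, the rank of each $\eta_\alpha$ is at least $3$. To pin down the rank and show it is independent of $\alpha$, I would analyze the characteristic subbundles
\begin{equation*}
\mathcal{E}_\alpha:=\{X\in \Gamma(TM) : i_X\eta_\alpha=0,\ i_X d\eta_\alpha=0\},
\end{equation*}
aiming to prove: (a) $\mathcal{E}_1=\mathcal{E}_2=\mathcal{E}_3$, and the common subbundle $\mathcal{E}$ lies inside the horizontal distribution $\mathcal{H}$; and (b) $\mathcal{E}$ is invariant under each $\phi_\alpha$. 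Granting (a) and (b), the restrictions $\phi_\alpha|_{\mathcal{H}}$ satisfy the quaternionic identities $\phi_\gamma=\phi_\alpha\phi_\beta$, since the correction $\eta_\beta\otimes\xi_\alpha$ in \eqref{3-sasaki} vanishes on $\mathcal{H}$; hence $\mathcal{E}$ becomes a quaternionic subbundle of $\mathcal{H}$, so $\dim\mathcal{E}=4(n-l)$ for some $0\le l\le n$, and every $\eta_\alpha$ then has rank $(4n+3)-4(n-l)=4l+3$, as required.

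To establish (a) and (b), I would exploit the three normality conditions $N_{\phi_\alpha}=[\phi_\alpha,\phi_\alpha]_{FN}+2\,d\eta_\alpha\otimes\xi_\alpha=0$ in conjunction with the structure relations \eqref{3-sasaki}, notably $\eta_\gamma=\eta_\alpha\circ\phi_\beta$ and $\xi_\gamma=\phi_\alpha\xi_\beta$. Differentiating the first of these identities and substituting from the Nijenhuis-type tensors should produce, after careful bookkeeping, a horizontal identity of the form
\begin{equation*}
d\eta_\beta(X,Y)=d\eta_\alpha(\phi_\gamma X,\phi_\gamma Y)\quad\text{for all }X,Y\in\mathcal{H},
\end{equation*}
together with $d\eta_\alpha(X,\xi_\beta)=0$ for $X\in\mathcal{H}$. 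The first immediately yields $\mathcal{E}_\alpha\cap\mathcal{H}=\mathcal{E}_\beta\cap\mathcal{H}$; combined with the second and the nonvanishing of $d\eta_\gamma(\xi_\alpha,\xi_\beta)$, they force $\mathcal{E}_\alpha\subset\mathcal{H}$, proving (a). Assertion (b) then follows from (a) together with the standard invariance $\phi_\alpha\mathcal{E}_\alpha=\mathcal{E}_\alpha$ recalled at the end of Section~\ref{preliminari}.

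The principal obstacle is the algebraic bookkeeping needed to extract these clean horizontal identities between $d\eta_\alpha$ and $d\eta_\beta$ from the Nijenhuis-type normality tensors: the quaternionic relations \eqref{3-sasaki} mix $\phi$ with $\eta\otimes\xi$ corrections that must be tracked carefully in order to isolate the purely horizontal content and to dispose of the mixed terms involving a single $\xi_\alpha$-direction.
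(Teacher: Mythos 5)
The paper itself contains no proof of Theorem \ref{rango}: it is quoted verbatim from \cite{CapDenDil1} and \cite{CapDenDil2}, so there is no internal argument to compare yours against, and I can only judge the proposal against what a complete proof must contain. Your architecture is the right one and is, in spirit, the one followed in those references: dispose of $c=0$ via Theorem \ref{classification} (the structure is $3$-cosymplectic, so $d\eta_\alpha=0$ and each rank is $1$); for $c\neq 0$ use the Cartan formula to get $d\eta_\gamma(\xi_\alpha,\xi_\beta)=-c\neq 0$, hence rank at least $3$; and then show the common characteristic distribution is a quaternionic, hence $4$-divisible, subbundle of $\mathcal{H}$, which forces each rank to equal $4l+3$ with the same $l$.

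The genuine gap sits exactly where you place ``the principal obstacle'': the two identities on which the whole argument rests --- $d\eta_\alpha(X,\xi_\beta)=0$ for $X\in\mathcal{H}$, and $d\eta_\beta(X,Y)=d\eta_\alpha(\phi_\gamma X,\phi_\gamma Y)$ on $\mathcal{H}$ --- are announced as targets rather than derived. These are the nontrivial content of the theorem (in the cited papers they occupy several lemmas, and the first already uses that the leaves of $\mathcal{V}$ are totally geodesic and that the $\xi_\alpha$ are Killing, not merely the normality conditions), so asserting that ``careful bookkeeping'' will produce them does not constitute a proof. Moreover, even granting the second identity, the claim that it ``immediately yields $\mathcal{E}_\alpha\cap\mathcal{H}=\mathcal{E}_\beta\cap\mathcal{H}$'' is too quick: what it gives directly is $\mathcal{E}_\beta\cap\mathcal{H}=\phi_\gamma\left(\mathcal{E}_\alpha\cap\mathcal{H}\right)$, while Tanno's result supplies only $\phi_\alpha\mathcal{E}_\alpha=\mathcal{E}_\alpha$, not invariance of $\mathcal{E}_\alpha$ under the other two structures; you must cycle through the three such relations (or prove the cross-invariance separately) before the three distributions can be identified and your assertion (b) extracted. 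The skeleton would support a correct proof, but as it stands the hard part has been deferred, not done.
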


According  to  Theorem \ref{rango}, the common rank of $\eta_1$, $\eta_2$, $\eta_3$ is called the \emph{rank} of the $3$-quasi-Sasakian manifold   $(M,\phi_\alpha,\xi_\alpha,\eta_\alpha,g)$

Furthermore, for any $3$-quasi-Sasakian manifold of rank $4l+3$ one can consider the following distribution
\begin{equation*}
{\mathcal E}^{4m}:=\left\{X\in\Gamma({\mathcal H})\; | \; i_X d\eta_\alpha=0, \  \alpha=1,2,3 \right\} \ \ \ (l+m=n)
\end{equation*}
and its orthogonal complement ${\mathcal E}^{4l+3}:=({\mathcal E}^{4m})^{\perp}$. In \cite{CapDenDil2} it was proved the following remarkable property of $3$-quasi-Sasakian manifolds, which in general does not hold for a
general quasi-Sasakian structure.

\begin{theorem}\label{distributions}
Let $(M,\phi_{\alpha},\xi_{\alpha},\eta_{\alpha},g)$ be a $3$-quasi-Sasakian manifold of rank $4l+3$. Then the distributions
 ${\mathcal E}^{4l+3}$ and ${\mathcal E}^{4m}$ are integrable and define Riemannian foliations with totally
geodesic leaves.
\end{theorem}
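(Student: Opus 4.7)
The plan is to verify integrability first for both distributions, and then derive the totally geodesic and Riemannian foliation properties via the Koszul formula. Throughout I would exploit two standard ingredients: first, the orthogonal decomposition $\mathcal{E}^{4l+3} = \mathcal{V}\oplus\tilde{\mathcal{E}}$, where $\tilde{\mathcal{E}} := \mathcal{E}^{4l+3}\cap \mathcal{H}$ has rank $4l$; and second, the fact that each Reeb field $\xi_\alpha$ of a quasi-Sasakian structure is Killing with respect to $g$, a classical consequence of normality and $d\Phi_\alpha=0$.

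For $\mathcal{E}^{4m}$, integrability is a short Cartan calculation. Given $X,Y\in\Gamma(\mathcal{E}^{4m})$, horizontality yields $\eta_\alpha([X,Y]) = -d\eta_\alpha(X,Y) = -(i_X d\eta_\alpha)(Y) = 0$, so $[X,Y]\in\mathcal{H}$; and the identity $i_{[X,Y]} = \lie_X i_Y - i_Y\lie_X$ applied to $d\eta_\alpha$, combined with $i_Y d\eta_\alpha = 0$ and $\lie_X d\eta_\alpha = d(i_X d\eta_\alpha) = 0$, gives $i_{[X,Y]} d\eta_\alpha = 0$. The same Lie-derivative manipulation shows that $[\xi_\alpha,Z]\in\mathcal{E}^{4m}$ for $Z\in\mathcal{E}^{4m}$, so each $\xi_\alpha$ preserves $\mathcal{E}^{4m}$.

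For $\mathcal{E}^{4l+3}$, the cases $[\mathcal{V},\mathcal{V}]$ and $[\mathcal{V},\tilde{\mathcal{E}}]$ are easy: the first is contained in $\mathcal{V}$ by Theorem \ref{classification}, while the second follows because, with $\xi_\alpha$ Killing and preserving $\mathcal{E}^{4m}$, the identity $(\lie_{\xi_\alpha}g)(Y,Z) = 0$ combined with $g(Y,Z)=0$ for $Y\in\tilde{\mathcal{E}}$, $Z\in\mathcal{E}^{4m}$ forces $g([\xi_\alpha,Y],Z)=0$. The crux is $[X,Y]\in\mathcal{E}^{4l+3}$ for $X,Y\in\tilde{\mathcal{E}}$, equivalently $g([X,Y],Z) = 0$ for every $Z\in\mathcal{E}^{4m}$. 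To handle it I would first establish $\phi_\alpha$-invariance of both $\mathcal{E}^{4m}$ and $\tilde{\mathcal{E}}$, using the three-structure relations $\eta_\beta\circ\phi_\alpha = \pm\eta_\gamma$ from \eqref{3-sasaki} together with a Cartan-style verification of $i_{\phi_\alpha X}d\eta_\beta = 0$ whenever $X\in\mathcal{E}^{4m}$. Expanding $d\Phi_\alpha(X,Y,Z)=0$, orthogonality makes $\Phi_\alpha(X,Z) = g(X,\phi_\alpha Z) = 0$ and similarly $\Phi_\alpha(Y,Z) = 0$, reducing the identity to an equation linking $\Phi_\alpha([X,Y],Z)$ to commutator terms of mixed type. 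Running the analogous computation for all three structures and combining via the quaternionic-like identities of \eqref{3-sasaki} should yield $\Phi_\alpha([X,Y],Z) = 0$; since $\phi_\alpha$ is a bijection on $\mathcal{E}^{4m}$, this amounts to the desired orthogonality of $[X,Y]$ to $\mathcal{E}^{4m}$.

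For the totally geodesic and Riemannian foliation properties, Koszul's formula expresses $g(\nabla_X Y,Z)$, for $X,Y$ tangent to either distribution and $Z$ normal, as a combination of Lie brackets and derivatives of $g$ along these vectors. The integrability just established supplies the antisymmetric part of the bracket, while the Killing property of the $\xi_\alpha$ (for $\mathcal{V}$-components) together with the closedness of each $\Phi_\alpha$ and $\phi_\alpha$-invariance (for $\tilde{\mathcal{E}}$-components) kill the symmetric part, producing totally geodesic leaves; the bundle-like condition for the Riemannian foliation reduces to the same invariance statements. The main obstacle is clearly the integrability argument for $\mathcal{E}^{4l+3}$: unlike the easy case of $\mathcal{E}^{4m}$, the condition $d\Phi_\alpha = 0$ for a single $\alpha$ is insufficient, and it is the simultaneous interplay of all three closed fundamental $2$-forms through the quaternionic structure that makes the $3$-quasi-Sasakian case significantly better-behaved than a single quasi-Sasakian structure.
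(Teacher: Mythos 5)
The paper does not actually prove Theorem \ref{distributions}; it is imported verbatim from \cite{CapDenDil2}, so your proposal has to be judged on its own merits rather than against an in-paper argument. Your treatment of $\mathcal{E}^{4m}$ is correct and complete: the Cartan-calculus computation showing $[X,Y]\in\mathcal{H}$ and $i_{[X,Y]}d\eta_\alpha=0$ is exactly the standard argument (it already works for a single quasi-Sasakian structure, which is why this half is not the ``remarkable'' part of the theorem). However, already your claim that ``the same Lie-derivative manipulation'' gives $[\xi_\alpha,Z]\in\mathcal{E}^{4m}$ has a gap: that manipulation needs $\lie_{\xi_\alpha}d\eta_\beta=d(i_{\xi_\alpha}d\eta_\beta)=0$, and while $i_{\xi_\alpha}d\eta_\alpha=0$, for $\alpha\neq\beta$ one has $i_{\xi_\alpha}d\eta_\beta\neq 0$ in the non-cosymplectic case (evaluate on $\xi_\gamma$ using $[\xi_\alpha,\xi_\gamma]=-c\xi_\beta$ from Theorem \ref{classification}). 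The step can be repaired, but only by invoking the specifically $3$-quasi-Sasakian identity $\lie_{\xi_\alpha}\eta_\beta=c\,\eta_\gamma$ (up to sign and normalization), so that $i_Z\,d(i_{\xi_\alpha}d\eta_\beta)=c\,i_Zd\eta_\gamma=0$; this needs to be stated and justified, not folded into ``the same manipulation.''

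The more serious problem is that the crux you correctly identify --- $g([X,Y],Z)=0$ for $X,Y\in\tilde{\mathcal{E}}$, $Z\in\mathcal{E}^{4m}$ --- is not actually proved. Expanding $d\Phi_\alpha(X,Y,Z)=0$ with the $\phi_\alpha$-invariance you postulate kills $\Phi_\alpha(Y,Z)$ and $\Phi_\alpha(Z,X)$, but leaves
\begin{equation*}
Z\bigl(\Phi_\alpha(X,Y)\bigr)-\Phi_\alpha([X,Y],Z)-\Phi_\alpha([Y,Z],X)-\Phi_\alpha([Z,X],Y)=0,
\end{equation*}
and the mixed brackets $[Y,Z]$, $[Z,X]$ are precisely the objects whose decomposition you do not yet control (knowing them amounts to essentially the local splitting you are trying to establish), while $Z(\Phi_\alpha(X,Y))$ has no reason to vanish. ``Running the analogous computation for all three structures and combining via the quaternionic-like identities \eqref{3-sasaki} should yield'' the conclusion is a statement of hope, not a proof, and it is exactly here that the theorem is nontrivial. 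The argument in \cite{CapDenDil2} proceeds in the opposite order: one first controls $\nabla\xi_\alpha$ and $(\nabla_X\phi_\alpha)Y$ --- which for a quasi-Sasakian structure are expressible through $d\eta_\alpha$, $\eta_\alpha$ and $\phi_\alpha$ because $N_{\phi_\alpha}=0$ and $d\Phi_\alpha=0$ --- proves $\nabla$-invariance of the two distributions (i.e.\ the totally geodesic property) directly, and then obtains integrability for free from $[X,Y]=\nabla_XY-\nabla_YX$; the Riemannian (bundle-like) property then follows because the orthogonal complement is itself totally geodesic. Your Koszul-formula paragraph for the totally geodesic part suffers from the same vagueness: after using integrability, what remains of the Koszul expression is $-Zg(X,Y)-g([X,Z],Y)-g([Y,Z],X)$, which is identically $-2g(\nabla_XZ,Y)-2g(\nabla_YZ,X)+2g(\nabla_XZ,Y)$-type bookkeeping and does not vanish without the covariant-derivative identities above. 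As it stands, the proposal is a plausible outline whose two central steps are asserted rather than established.
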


In particular it follows that $\nabla{\mathcal E}^{4l+3}\subset{\mathcal E}^{4l+3}$ and $\nabla{\mathcal E}^{4m}\subset{\mathcal E}^{4m}$. The leaves of such foliations are $3$-$c$-Sasakian manifolds (i.e., for each $\alpha\in\left\{1,2,3\right\}$, $d\eta_{\alpha}=c\Phi_{\alpha}$) and hyper-K\"{a}hler manifolds, respectively (cf. Theorem 5.4 and Theorem 5.6 of \cite{CapDenDil2}). Thus we can state the following corollary.

\begin{corollary}\label{local}
Any $3$-quasi-Sasakian manifold of rank $4l+3$, with $1\leq l < n$, is the local product of a $3$-c-Sasakian manifold  and of a hyperK\"{a}hler manifold.
\end{corollary}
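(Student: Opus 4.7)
The plan is to invoke a standard local splitting theorem for Riemannian manifolds whose tangent bundle decomposes into two complementary, orthogonal, integrable distributions with totally geodesic leaves. All the hard geometric work has already been done in Theorem \ref{distributions} and in the results of \cite{CapDenDil2} cited just after it.

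First I would observe that, under the rank hypothesis $1\le l<n$, both summands in $TM=\mathcal{E}^{4l+3}\oplus\mathcal{E}^{4m}$ are non-trivial: the vertical Reeb directions together with the characteristic part sit inside $\mathcal{E}^{4l+3}$ (dimension $4l+3\ge 7$), while the transverse hyper-K\"ahler part $\mathcal{E}^{4m}$ has dimension $4m=4(n-l)\ge 4$. By Theorem \ref{distributions} these two distributions are orthogonal, integrable, and their leaves are totally geodesic. In particular $\nabla_X Y\in\Gamma(\mathcal{E}^{4l+3})$ for $X,Y\in\Gamma(\mathcal{E}^{4l+3})$, and symmetrically for $\mathcal{E}^{4m}$.

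Next I would apply the classical local de Rham theorem (equivalently, the statement that two complementary orthogonal foliations with totally geodesic leaves produce, around each point, a distinguished chart in which the metric is a product): around every $p\in M$ there exists an open neighbourhood $U$ isometric to a Riemannian product $U_1\times U_2$, where $U_1$ is an open piece of the leaf of $\mathcal{E}^{4l+3}$ through $p$ and $U_2$ an open piece of the leaf of $\mathcal{E}^{4m}$ through $p$. The identification $TM|_U\cong TU_1\oplus TU_2$ coincides with $\mathcal{E}^{4l+3}\oplus\mathcal{E}^{4m}$.

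Finally, by the results recalled just above the statement (Theorems 5.4 and 5.6 of \cite{CapDenDil2}), the leaves of $\mathcal{E}^{4l+3}$ carry an induced $3$-$c$-Sasakian structure and the leaves of $\mathcal{E}^{4m}$ carry an induced hyper-K\"ahler structure; the tensor fields $\phi_\alpha$ preserve both distributions and restrict to the corresponding structure tensors on each leaf. Hence the local splitting $U\cong U_1\times U_2$ is in fact an isomorphism of geometric structures, which is exactly the assertion of the corollary. The only real point to check is the local splitting theorem, which is classical; the main obstacle in the whole result was packaged into Theorem \ref{distributions} and is therefore already available for free here.
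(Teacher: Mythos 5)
Your proposal is correct and follows essentially the same route as the paper: the authors also deduce the corollary directly from Theorem \ref{distributions} (noting $\nabla\mathcal{E}^{4l+3}\subset\mathcal{E}^{4l+3}$, $\nabla\mathcal{E}^{4m}\subset\mathcal{E}^{4m}$, i.e.\ the two orthogonal complementary distributions are parallel, hence the local de Rham splitting) together with Theorems 5.4 and 5.6 of \cite{CapDenDil2} identifying the leaves as $3$-$c$-Sasakian and hyper-K\"ahler. You merely spell out the local decomposition argument that the paper leaves implicit.
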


Another strong consequence of Theorem \ref{distributions} is the following

\begin{corollary}
Any $3$-quasi-Sasakian manifold of maximal rank is necessarily \mbox{$3$-$c$-Sasakian}.
\end{corollary}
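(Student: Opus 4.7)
The plan is to read this corollary as the degenerate $m=0$ case of Theorem \ref{distributions} and the discussion preceding Corollary \ref{local}. First, I would unpack what ``maximal rank'' means: since $\dim M = 4n+3$, saying that the three $\eta_\alpha$ attain maximal rank means that the common rank provided by Theorem \ref{rango} is $4l+3 = 4n+3$, so $l = n$ and hence $m = 0$ in the decomposition $TM = {\mathcal E}^{4l+3} \oplus {\mathcal E}^{4m}$ (together with the vertical distribution, which is already contained in ${\mathcal E}^{4l+3}$).

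Next, I would invoke the definition of ${\mathcal E}^{4m}$ directly: $m=0$ forces ${\mathcal E}^{4m} = \{0\}$, so ${\mathcal E}^{4l+3} = TM$. By Theorem \ref{distributions} this distribution is integrable with totally geodesic leaves, and the paragraph following that theorem asserts that each such leaf is a $3$-$c$-Sasakian manifold, i.e. satisfies $d\eta_\alpha = c\Phi_\alpha$ for one common constant $c$ and all $\alpha \in \{1,2,3\}$. Since ${\mathcal E}^{4l+3}$ is the whole tangent bundle and $M$ is connected, $M$ itself is a single leaf of this foliation. Consequently the $3$-quasi-Sasakian structure on $M$ is $3$-$c$-Sasakian. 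For consistency, Theorem \ref{rango} also tells us that rank $4n+3$ corresponds to $[\xi_\alpha,\xi_\beta] = c\xi_\gamma$ with $c \neq 0$, matching the non-trivial value of the constant in $d\eta_\alpha = c\Phi_\alpha$.

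There is essentially no obstacle here once Theorem \ref{distributions} and its accompanying statements about the nature of the leaves are available: the corollary is simply the remark that the local product decomposition of Corollary \ref{local} degenerates, in the maximal rank regime, into its $3$-$c$-Sasakian factor alone, so that the degeneration is not merely local but global. An alternative, more direct route (avoiding appeal to Theorem \ref{distributions}) would be to argue that $m=0$ means the horizontal forms $d\eta_1, d\eta_2, d\eta_3$ have no common isotropic vector in ${\mathcal H}$, and then combine $d\Phi_\alpha = 0$ with the quaternionic identities \eqref{3-sasaki} to force proportionality $d\eta_\alpha = c\Phi_\alpha$; but this is precisely the content packaged into the cited theorems, so the clean deduction via Theorem \ref{distributions} is preferable.
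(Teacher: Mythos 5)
Your proposal is correct and is essentially the argument the paper intends: the corollary is stated as an immediate consequence of Theorem \ref{distributions} together with the fact (Theorem 5.4 of \cite{CapDenDil2}) that the leaves of ${\mathcal E}^{4l+3}$ are $3$-$c$-Sasakian, and in the maximal rank case $m=0$, so ${\mathcal E}^{4l+3}=TM$ and, by connectedness, $M$ is itself the single leaf. Your reading (including the consistency check via Theorem \ref{rango}) matches the paper's implicit reasoning, so nothing further is needed.
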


Thus in the two extremal cases --- maximal and minimal rank --- the geometry of a $3$-quasi-Sasakian manifold is well known. In the rank $1$ case, the structure turns out to be $3$-cosymplectic and we can refer the reader to \cite{CapDenYud1} for the main properties of these geometric structures and non-trivial examples. In the rank $(4n+3)$ case, by applying a certain homothety one can obtain a $3$-Sasakian structure.

Thus we shall deal with the non-trivial cases $\textrm{rank}(M)\neq 1$, $\textrm{rank}(M)\neq\dim(M)$.

\section{A general construction}

Let $(M',\phi'_{\alpha},\xi'_{\alpha},g')$ and
$(M'',J''_{\alpha},g'')$ be a 3-Sasakian  and a
hyper-K\"{a}hler manifold, respectively. Set $\dim(M')=4l+3$ and
$\dim(M'')=4m$. We define a canonical 3-quasi-Sasakian structure on the
product manifold $M:=M'\times M''$ in the following way.

We define as Reeb vector fields $\xi_{\alpha}:=\xi'_{\alpha}$, for
each $\alpha\in\left\{1,2,3\right\}$. Next, let $\phi_{\alpha}$ be
the $(1,1)$-tensor field determined by
\begin{equation*}
\phi_{\alpha}X:=\left\{
                 \begin{array}{ll}
                   \phi'_{\alpha}X, & \hbox{if $X\in\Gamma(TM')$} \\
                   J''_{\alpha}X, & \hbox{if $X\in\Gamma(TM'')$.}
                 \end{array}
               \right.
\end{equation*}
Finally, we consider the product metric $g:=g'+g''$ and we define
three $1$-forms $\eta_{1}$, $\eta_{2}$, $\eta_{3}$ by
$\eta_{\alpha}:=g(\cdot,\xi_\alpha)$. From the definition it follows
that the horizontal distribution ${\mathcal
H}:=\bigcap_{\alpha=1}^{3}\ker(\eta_{\alpha})$ coincides with
${\mathcal H}'\oplus{TM''}$, where ${\mathcal H}'$ is the horizontal
distribution of the $3$-Sasakian manifold $M'$. Then on $\mathcal H$
the triple $(\phi_{1},\phi_{2},\phi_{3})$ satisfies the quaternionic
relations
\begin{equation*}
\phi_{\alpha}\phi_{\beta}=-\phi_{\beta}\phi_{\alpha}=\phi_{\gamma}
\end{equation*}
for a cyclic permutation $(\alpha,\beta,\gamma)$ of
$\left\{1,2,3\right\}$. On the other hand,
$\phi_{\alpha}\xi_{\beta}=\phi'_{\alpha}\xi'_{\beta}=\xi'_{\gamma}=\xi_{\gamma}=-\phi_{\beta}\xi_{\alpha}$.
Hence
\begin{gather*}
\phi_\gamma=\phi_{\alpha}\phi_{\beta}-\eta_{\beta}\otimes\xi_{\alpha}=-\phi_{\beta}\phi_{\alpha}+\eta_{\alpha}\otimes\xi_{\beta},\\
\xi_{\gamma}=\phi_{\alpha}\xi_{\beta}=-\phi_{\beta}\xi_{\alpha}, \ \
\eta_{\gamma}=\eta_{\alpha}\circ\phi_{\beta}=-\eta_{\beta}\circ\phi_{\alpha}
\end{gather*}
and we conclude that
\(\{(\phi_\alpha,\xi_\alpha,\eta_\alpha)\}_{\alpha\in\{1,2,3\}}\)
 is an
almost contact $3$-structure on $M$. By the very definition of $g$
and $\phi_{\alpha}$ then we have that $g$ is a compatible metric.

Let us show that
\(\{(\phi_\alpha,\xi_\alpha,\eta_\alpha,g)\}_{\alpha\in\{1,2,3\}}\) is a
$3$-quasi-Sasakian structure on $M$. Notice that each fundamental
$2$-form $\Phi_\alpha:=g(\cdot,\phi_{\alpha}\cdot)$ is given by
\begin{equation*}
\Phi_{\alpha}(X,Y):=\left\{
                      \begin{array}{ll}
                        \Phi'_{\alpha}(X,Y), & \hbox{if $X,Y\in\Gamma(TM')$} \\
                        0, & \hbox{if $X\in\Gamma(TM')$, if $Y\in\Gamma(TM'')$} \\
                        \Omega''_{\alpha}(X,Y), & \hbox{if $X,Y\in\Gamma(TM'')$}
                      \end{array}
                    \right.
\end{equation*}
where $\Phi'_\alpha$ and $\Omega''_{\alpha}$ denote the fundamental
$2$-forms of $(M',\phi'_\alpha,\xi'_\alpha,g')$ and
$(M'',J''_\alpha,g'')$, respectively. By using the well-known formula
\begin{align*}
d\Phi_{\alpha}(X,Y,Z)&=X(\Phi_{\alpha}(Y,Z))+Y(\Phi_{\alpha}(Z,X))+Z(\Phi_{\alpha}(X,Y))\\
&\quad-\Phi_\alpha([X,Y],Z)-\Phi_\alpha([Y,Z],X)-\Phi_\alpha([Z,X],Y)
\end{align*}
we see that
\begin{equation*}
d\Phi_{\alpha}(X,Y,Z)=\left\{
                       \begin{array}{ll}
                         d\Phi'_{\alpha}(X,Y,Z), & \hbox{if $X,Y,Z\in\Gamma(TM')$} \\
                         0, & \hbox{if $X,Y\in\Gamma(TM')$, $Z\in\Gamma(TM'')$} \\
                         0, & \hbox{if $X\in\Gamma(TM')$, $Y,Z\in\Gamma(TM'')$} \\
                         d\Omega''_{\alpha}(X,Y,Z), & \hbox{if $X,Y,Z\in\Gamma(TM'')$.}
                       \end{array}
                    \right.
\end{equation*}
Since $\Phi'_\alpha$ and $\Omega''_{\alpha}$ are closed, we conclude
that also each $\Phi_\alpha$ is closed. \ Moreover, in order to
prove the normality of the $3$-structure
\(\{(\phi_{\alpha},\xi_{\alpha},\eta_{\alpha})\}_{\alpha\in\{1,2,3\}}\), it is enough to check the vanishing of
$N_{\phi_\alpha}$
on the couples of vector fields of this type:
\begin{equation*}
N_{\phi_\alpha}(X',Y'), \ \ \ \ N_{\phi_\alpha}(X',Y''), \ \ \
N_{\phi_\alpha}(Y',Y''),
\end{equation*}
where $X'$, $Y'$  are vector fields on $M'$ and $X''$, $Y''$  are
vector fields on $M''$. Since $d\eta_\alpha=0$ on $TM''$, using the
definitions of $\phi_\alpha$ and $N_{\phi_\alpha}$,
\begin{gather*}
N_{\phi_\alpha}(X',Y')=N_{\phi'_\alpha}(X',Y')=0,\\
N_{\phi_\alpha}(X'',Y'')=N_{J''_\alpha}(X'',Y'')=0,
\end{gather*}
because $M'$ is 3-Sasakian and $M''$ hyper-K\"{a}hler, and
\begin{equation*}
N_{\phi_\alpha}(X',Y'')=\phi_\alpha^{2}[X',Y'']+[\phi_\alpha
X',\phi_\alpha Y'']-\phi_\alpha[\phi_\alpha
X',Y'']-\phi_\alpha[X',\phi_\alpha Y'']=0
\end{equation*}
since each summand in the last equation is zero.

Therefore $(M,\phi_\alpha,\xi_\alpha,\eta_\alpha,g)$ is a
3-quasi-Sasakian manifold with rank $4l+3=\dim(M')$.

We say that $f\colon M\to M$ is a 3-quasi-Sasakian isometry if
it is an isometry of the Riemannian manifold $(M,g)$
preserving each quasi-Sasakian structure, namely
\begin{equation}\label{isometry1}
f_\ast \circ \phi_\alpha = \phi_\alpha \circ f_\ast, \ \ \ f_\ast
\xi_\alpha = \xi_\alpha
\end{equation}
for each $\alpha\in\left\{1,2,3\right\}$. Notice that from
\eqref{isometry1} it follows that
\begin{equation}\label{isometry2}
f^{\ast}\eta_\alpha = \eta_\alpha.
\end{equation}
Indeed for any $X\in\Gamma(TM)$
\begin{equation*}
f^{\ast}\eta_\alpha(X)=\eta_{\alpha}(f_{\ast}X)=g(f_{\ast}X,\xi_\alpha)=g(f_{\ast}X,f_{\ast}\xi_\alpha)=g(X,\xi_\alpha)=\eta_\alpha(X).
\end{equation*}
Given a free and properly discontinuous action of a discrete group
(in particular, a free  action of a finite group) $G$ on a 3-quasi-Sasakian manifold $M$ by 3-quasi-Sasakian isometries,
 the quotient $M/G$ is a smooth manifold of the same dimension as $M$ and inherits a 3-quasi-Sasakian structure from $M$.

Recall that $f\colon M''\to M''$ is a hyper-K\"ahler isometry if
$f$ is an isometry of the Riemannian manifold $(M'',g'')$ and
\begin{equation}\label{isometry3}
f_\ast \circ J''_\alpha = J''_\alpha \circ f_\ast
 \end{equation}
for each $\alpha\in\left\{1,2,3\right\}$.
From \eqref{isometry3} it follows that
\begin{equation*}
f^* \Omega''_\alpha = \Omega''_\alpha.
\end{equation*}

Suppose $G$ is a finite group that acts on $M'$ by 3-Sasakian isometries
and on $M''$ by hyper-K\"ahler isometries. Then $G$ also acts on the product
manifold $M'\times M''$ by $g\cdot (p',p'') = (g\cdot p',g\cdot p'')$,
$g\in G$. It is easy to check that $G$ preserves the 3-quasi-Sasakian
structure on $M'\times M''$ defined above. If the action of $G$ on $M'\times
M''$ is free then the quotient $(M'\times M'')/G$ is a
a 3-quasi-Sasakian manifold.
\bigskip

As an application, we consider the 3-Sasakian manifold
$S^{4l+3}$. We recall how the standard 3-Sasakian structure
$(\phi'_\alpha,\xi'_\alpha,\eta'_\alpha,g')$ of the sphere is
defined . Let us consider the sphere $S^{4l+3}$ as an
hypersurface in $\mathbb{H}^{l+1}$. Let $(J_{1},J_{2},J_{3})$ be the
standard quaternionic structure of $\mathbb{H}^{l+1}$ that is
upon identification of $T_x \mathbb{H}^{l+1}$
with $\mathbb{H}^{l+1}$ the operators $J_1$, $J_2$, $J_3$ act by multiplication
with $\qi$, $\qj$, $\qk$ on the left.

Let $N$ be the outer
vector field normal to the sphere. Then one can prove that the
vector fields
\begin{equation}\label{sphere1}
\xi'_\alpha:=-J_{\alpha}N
\end{equation}
are tangent to the sphere. Moreover, for any
$X\in\Gamma(TS^{4l+3})$, we decompose $J_{\alpha}X$ in
their components tangent and normal to the sphere,
\begin{equation}\label{sphere2}
J_{\alpha}X = \phi'_{\alpha}X + \eta'_{\alpha}(X)N,
\end{equation}
so obtaining, for each $\alpha\in\left\{1,2,3\right\}$, a tensor
field $\phi'_\alpha$ and a $1$-form $\eta'_{\alpha}$ on
$S^{4l+3}$. Then one can check that the geometric structure
\(\{(\phi'_\alpha,\xi'_\alpha,\eta'_\alpha,g')\}_{\alpha\in\{1,2,3\}}\) is a 3-Sasakian structure
on $S^{4l+3}$, being $g'$ the
Riemannian metric induced by the Riemannian metric $g$ of
$\mathbb{H}^{l+1}\cong \mathbb{R}^{4l+4}$.

Now we consider the isometry $f$ of $\mathbb{H}^{l+1}$ given by the
multiplication with $\textbf{i}$ {on the right}. Notice that $f(S^{4l+3})=S^{4l+3}$,
because for any $x\in S^{4l+3}$ one has $\|f(x)\|=\|x\qi \|=\|x\|=1$.
Hence $f$ induces an isometry on $(S^{4l+3},g')$, again denoted by
$f$. Notice that the associativity of the product in $\mathbb{H}$ implies
\begin{equation*}
f_{\ast}\circ J_{\alpha}=J_{\alpha}\circ f_{\ast}.
\end{equation*}
Thus $f$ is a hyper-K\"ahler isometry.
Moreover, for any $X\in\Gamma(TS^{4l+3})$,
$g(f_{\ast}N,f_{\ast}X)=g(N,X)=0$, so that $f_{\ast}N\in
(TS^{4l+3})^{\perp}=\left\langle N\right\rangle$. Since
$\left\| N\right\|=1$ and $f$ is an isometry, it follows that
\begin{equation*}
f_{\ast}N=N.
\end{equation*}
Then by \eqref{sphere1} and \eqref{sphere2} we get
\begin{equation*}
f_{\ast}\xi'_{\alpha}=-f_{\ast}J_{\alpha}N=-J_{\alpha}f_{\ast}N=-J_{\alpha}N=\xi'_{\alpha},
\end{equation*}
and, for all $X\in\Gamma(TS^{4l+3})$,
\begin{align*}
f_{\ast}(\phi'_\alpha X)+\eta'_\alpha(X)N&=f_{\ast}(\phi'_\alpha
X)+\eta'_\alpha(X)f_{\ast}N\\
&=f_{\ast}J_{\alpha}X\\
&=J_{\alpha}f_{\ast}X\\
&=\phi'_\alpha(f_\ast X)+\eta'_\alpha(f_\ast X)N,
\end{align*}
from which, taking the tangential and the normal components to the
sphere, it follows that
$f_{\ast}\circ\phi'_{\alpha}=\phi'_{\alpha}\circ f_{\ast}$ and
$f^{\ast}\eta'_{\alpha}=\eta'_{\alpha}$. Thus $f$ is a 3-Sasakian
isometry of $S^{4l+3}$.
Moreover, $f^4$ is the identity operator. Thus we get an action of $\Z_4$ on
$S^{4l+3}$ by 3-Sasakian isometries.

Let $m$ be a positive integer. We denote the hyper-K\"ahler isometry of
$\mathbb{H}^m$, $(q_1,\dots,q_m) \mapsto (q_1\qi,\dots, q_m\qi) $, by $h$.
The map $h$ induces a hyper-K\"ahler isometry on the torus $\tor^{4m} =
\mathbb{H}^m/ \Z^{4m}$.
Thus $h$ generates an action of $\Z_4$ on $\tor^{4m}$ by hyper-K\"ahler isometries.
Note, that $\Z_4$ acts freely on $S^{4l+3}$, but has a
fixed point $[0]$ in $\T^{4m}$. Nevertheless, the resulting action of $\Z_4$ on
$S^{4l+3}\times \tor^{4m}$ is free.
We will denote the 3-quasi-Sasakian manifold $(S^{4l+3} \times \tor^{4m})/\Z_4$ by
$M_{l,m}$.

Concerning this example, in view of Corollary \ref{local}, an interesting
question is the following: is $M_{l,m}$ the global product of a $3$-Sasakian
manifold of dimension $4l+3$ and a hyperK\"{a}hler manifold of dimension
$4m$?

In the next Section we shall show that the answer is negative, at least in the case $l=1$ and $m=1$. Namely we will prove that the 3-quasi-Sasakian manifold
\begin{equation*}
M_{1,1}:=(S^{7}\times\mathbb{T}^{4})/\mathbb{Z}_{4}
\end{equation*}
is not
topologically equivalent to the product of a $7$-dimensional compact
3-Sasakian manifold and a $4$-dimensional compact hyper-K\"{a}hler
manifold.

\section{The manifold $M_{1,1}=(S^{7}\times\mathbb{T}^{4})/\mathbb{Z}_{4}$}
Let $M$ be a compact Riemannian manifold and $G$ a finite group freely
acting on $M$. Denote by $\rho_M$ the corresponding group homomorphism from  $G$ to $\aut (M)$. Then from the Hodge theory we  obtain the isomorphism
\begin{equation}
\label{iso}
H^*\left( M/G \right)\cong H^*\left( M \right)^G:=\left\{ x \in H^*( M )\,  \middle|\, \rho(a)^* x=x, \ \mbox{for all }a\in G\right\}.
\end{equation}
Indeed, every harmonic form on $M/G$ lifts a $G$-periodic harmonic
form on $M$ and every $G$-periodic form on $M$ defines a periodic
form on $M/G$. Here it is important that the projection $M\to M/G$ is
a local diffeomorphism and the Laplacian $\triangle$ is defined
locally.

Now, let $M$ and $N$ be two compact manifolds with $G$-action given by
$\rho_M \colon G \to \aut (M)$ and $\rho_N \colon G \to \aut(N)$.
We will write $\rho\colon G\to \aut(M\times N)$ for the corresponding action on
the product $M\times N$.
If $\omega$ is a $q$-form on $M$ and $\sigma$ is a $p$-form on $N$, then
$\pp_M^* \omega\wedge \pp_N^* \sigma$  is a $(p+q)$-form on $M\times N$. Moreover,
\begin{equation*}
\rho(a)^*  \left(\pp_M^* \omega\wedge\pp_N^* \sigma  \right) = \pp_M^*
\rho_M(a)^*   \omega \wedge
\pp_M^* \rho_N(a)^*  \sigma
\end{equation*}
 for $a\in G$.
By K\"unneth theorem we have
\begin{equation*}
H^k\left( M\times N \right) = \bigoplus_{p+q= k} H^q\left( M
\right)\otimes H^p\left( N \right).
\end{equation*}
From the above we see that $H^q\left( M \right)\otimes H^p\left( N
\right)$ is a $G$-invariant subspace of $H^k\left( M \times N \right)$.
Therefore
\begin{equation}
\label{kun}
H^k\left( M\times N \right)^G = \bigoplus_{q+p= k} \left( H^q\left(
M \right)\otimes H^p\left( N \right) \right)^G.
\end{equation}
Let us now specialize to the case of $M = S^{7}$ and $N = \tor^{4}$ with the
action of $\Z_4$ on $S^7$ and $\tor^4$ defined in the previous section.
Note that since the isometry $f\colon S^7\to S^7$ was orientation preserving,
the induced action of $\Z_4$ on $H^7(S^7) \cong \R$ is trivial.
It is also clear that $\Z_4$ acts trivially on $H^0(S^7) \cong \R$. Thus
for any $0\le k\le 4$
\begin{align}
\label{hs}
(H^0(S^7) \otimes H^k(\tor^4))^{\Z_4} & \cong H^k(\tor^4)^{\Z_4}, &
(H^7(S^7) \otimes H^k(\tor^4))^{\Z_4} & \cong H^k(\tor^4)^{\Z_4}.
\end{align}
Let us denote the Betti numbers of $M_{1,1}$ by $b_k$  and we write
$\tilde{b}_k$  for $\dim
H^k(\tor_4)^{\Z_4}$.
Then, from \eqref{iso}, \eqref{kun}, and \eqref{hs} it follows that
\begin{equation}
\label{bettis}
\begin{aligned}
b_0 & = \tilde{b}_0 = 1, & b_1 & = \tilde{b}_1, & b_2 & = \tilde{b}_2, & b_3 & =
\tilde{b}_3, & b_4 & = \tilde{b}_4, & b_5 & = b_6=0,\\  b_7 &= \tilde{b}_0 = 1,
&
b_8 & = \tilde{b}_1, & b_9 & = \tilde{b}_2, & b_{10} & = \tilde{b}_3 ,&
b_{11} & = \tilde{b}_4 = 1.
\end{aligned}
\end{equation}
Now we compute $\tilde{b}_1$, $\tilde{b}_2$,  and $\tilde{b}_3$. Note that from
the above equations and Poincar\'e duality for $M_{1,1}$, we get
$\tilde{b}_1 = b_1 = b_{10} = \tilde{b}_3$.
Thus it is enough to compute $\tilde{b}_1$ and~$\tilde{b}_2$.
The cup product on $H^*(\tor^4)$ induces the $\Z_4$-invariant isomorphism
\begin{equation*}
\begin{aligned}
\Lambda^* H^1(\tor^4) &\longrightarrow H^*(\tor^4)\\
[\alpha_1] \wedge \dots \wedge [\alpha_k] & \longmapsto
[\alpha_1 \wedge \dots \wedge \alpha_k],
\end{aligned}
\end{equation*}
where $\Lambda^* V$ stands for the exterior algebra of a vector space $V$.
Thus
\begin{equation*}
\tilde{b}_k = \dim (\Lambda^k H^1(\tor^4))^{\Z_4} = \dim (\Lambda^k
H^1(\tor_4))^{h^*},
\end{equation*}
where $h\colon \tor^4\to \tor^4$ was defined in the previous section.
Let $x_{\qq}$, $x_{\qi}$, $x_{\qj}$, $x_{\qk}$ be the coordinate functions on
$\tor^4$ induced from $\mathbb{H}$. Denote by
Let $\theta_{\qq}$, $\theta_{\qi}$, $\theta_{\qj}$, and $\theta_{\qk}$ be the
dual 1-forms. Then the classes
$[\theta_{\qq}]$, $[\theta_{\qi}]$, $[\theta_{\qj}]$, and $[\theta_{\qk}]$ give a
basis of  $H^1(\tor^4)$.
The matrix of $h^*$ in this basis is
\begin{equation*}
A := \left(
\begin{array}{cccc}
0 & 1 & 0 & 0 \\
-1 & 0 & 0 & 0\\
0 & 0 & 0 & -1 \\
0 & 0 & 1 & 0
\end{array}
\right)
\end{equation*}
The eigenvalues of  $A$ over $\mathbb{C}$ are $i$ and $-i$. Since $1$ is not
among the eigenvalues there is no
element in $H^1(\tor^4)$ which is $h^*$-invariant.  Thus $\tilde{b}_1 = 0$.
The matrix of $h^*$ in the basis
\begin{align*}
[\theta_{\qq}] \wedge [\theta_{\qi}], \quad
[\theta_{\qq}] \wedge [\theta_{\qj}] + [\theta_{\qi}] \wedge [\theta_{\qk}], \quad
[\theta_{\qq}] \wedge [\theta_{\qj}] - [\theta_{\qi}] \wedge [\theta_{\qk}]
\\[2ex]
[\theta_{\qq}] \wedge [\theta_{\qk}] + [\theta_{\qi}] \wedge [\theta_{\qj}], \quad
[\theta_{\qq}] \wedge [\theta_{\qk}] - [\theta_{\qi}] \wedge [\theta_{\qj}], \quad
[\theta_{\qj}] \wedge [\theta_{\qk}]
\end{align*}
of $\Lambda^2 H^1(\tor^4)$ is
\(\textrm{diag}\left( 1,-1,1,1,-1,1 \right)\).
Thus $\tilde{b}_2 = \dim (\Lambda^2 H^1(\tor^4))^{h^*} =  4$.
Using that $\tilde{b}_1 = 0 = \tilde{b}_3$, $\tilde{b}_2 = 4$, we get
from \eqref{bettis}
\begin{equation*}
\begin{aligned}
b_0 &= b_4 = b_7 = b_{11} = 1, & b_1 &= b_3 = b_5 = b_6 = b_8 =  b_{10 } = 0, &
b_2 & = b_9 = 4.
\end{aligned}
\end{equation*}
Thus the Poincar\'e polynomial of $M_{1,1}$ is
\begin{equation}
P(t) := 1 + 4t^2 + t^4 + t^7 + 4t^9 + t^{11} = (1 + t^7) (1 + 4t^2 + t^4).
\end{equation}

Suppose $M_{1,1}\cong M'\times M''$, where $M'$ is a $7$-dimensional 3-Sasakian
manifold and $M''$ a $4$-dimensional hyper-K\"ahler manifold.
Denote by $P'$ and $P''$ the Poincar\'e polynomial of $M'$, respectively of
$M''$. Then by K\"unneth theorem
\begin{equation}\label{pord}
 P(t) = P'(t) P''\left(t  \right).
\end{equation}
We will write $p_1 \le p_2$ for two polynomials with non-negative coefficients
if all the coefficients of $p_2 - p_1$
are non-negative. We also write $p_1 < p_2$ if $p_1 \le p_2 $ and $p_1\not=
p_2$.
It is obvious that if $p_1 \le p_2$ then $p_1 p \le p_2 p$, and if $p_1<p_2$
then $p_1 p <p_2 p $ for any non-zero polynomial $p$ with non-negative
coefficients.

With this notation we have $P''(t) \ge 1 + t^7$, since $M''$ is a compact
orientable
$7$-dimensional manifold.
Let us recall the following well-known
result.
\begin{theorem}
    \label{kodaira}
    If $M^4$ is a compact four-dimensional hyper-K\"{a}hler manifold, then $M^4$ is either a K3 surface or a
    four dimensional torus.
\end{theorem}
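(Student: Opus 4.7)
The plan is to exploit the fact that a hyper-K\"ahler metric in real dimension four produces a nowhere vanishing holomorphic symplectic form, and then to invoke the Enriques--Kodaira classification to list all the compact complex surfaces admitting such a form.

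First I would fix one of the three complex structures, say $J_1''$. With respect to $J_1''$ the pair $(M^4,J_1'',g'')$ is a compact K\"ahler surface with K\"ahler form $\Omega_1''$. Setting $\Omega := \Omega_2'' + i\,\Omega_3''$, the quaternionic relations show pointwise that $\Omega$ is of type $(2,0)$ with respect to $J_1''$, that it is $d$-closed because each $\Omega_\alpha''$ is closed, and that $\Omega \wedge \overline{\Omega}$ is a nonzero multiple of the Riemannian volume form, so $\Omega$ is nowhere vanishing. Hence the canonical bundle $K_{M^4}$ is holomorphically trivial, and in particular $c_1(M^4) = 0$ in $H^2(M^4,\mathbb{R})$.

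Next I would invoke the Enriques--Kodaira classification. The compact K\"ahler surfaces with numerically trivial canonical bundle fall into four families: K3 surfaces, complex tori, Enriques surfaces, and bielliptic (hyperelliptic) surfaces. To finish I would rule out the last two families. The holomorphic symplectic form produced above gives $h^{2,0}(M^4) \geq 1$, whereas Enriques surfaces have $h^{2,0} = 0$ (their canonical bundle is a nontrivial element of order two) and bielliptic surfaces likewise satisfy $h^{2,0}=0$. Thus only the K3 and the torus cases survive, and both are known to admit hyper-K\"ahler metrics --- $T^4$ via a flat metric and K3 via Yau's solution of the Calabi conjecture.

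The main obstacle in this outline is the Enriques--Kodaira classification itself, which is a deep structural result that I would cite as a black box rather than attempt to reprove; everything else is either a pointwise linear-algebra computation with the quaternions, or the reading off of tabulated Hodge numbers in the classification, and presents no real difficulty.
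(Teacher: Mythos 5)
Your argument is correct, but it reaches the classification by a different pair of inputs than the paper does. The paper simply asserts that the canonical bundle is trivial (``every hyper-K\"ahler manifold is Calabi--Yau'') and, crucially, quotes Wakakuwa's theorem that a manifold with holonomy in $Sp(n)$ has even first Betti number; it then invokes Kodaira's classification of compact surfaces with trivial canonical bundle and even $b_1$, which yields directly K3 or torus. You instead make the triviality of $K_{M^4}$ self-contained by exhibiting the holomorphic symplectic form $\Omega=\Omega''_2+i\,\Omega''_3$ (type $(2,0)$ for $J''_1$, closed hence holomorphic, nowhere zero since $\Omega\wedge\overline{\Omega}$ is a volume form), use the K\"ahlerness of $(M^4,J''_1,g'')$ in place of the even-$b_1$ input, and then run the Enriques--Kodaira list of K\"ahler surfaces with numerically trivial canonical bundle, eliminating Enriques and bielliptic surfaces via $h^{2,0}\geq 1$ (equivalently, their canonical bundles are nontrivial torsion, which your holomorphic triviality already excludes). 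Both routes are legitimate: the paper's is shorter because Kodaira's Section 6A already packages the dichotomy once $b_1$ is even and $K$ is trivial, while yours avoids the holonomy/Betti-number theorem of Wakakuwa altogether and makes the Calabi--Yau step explicit; the only points you leave tacit are the standard facts that numerical triviality of $K$ forces minimality (no $(-1)$-curves, by adjunction) so that the four-family list applies, and that a two-dimensional complex torus is diffeomorphic to $\tor^4$, neither of which is a genuine gap.
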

\begin{proof}
    From \cite[Theorem 8.1]{wakakuwa} it follows that
    $b_1(M^4)$ is even. Moreover, since every hyper-K\"{a}hler manifold is
    Calabi-Yau, $M^4$ has a trivial
    canonical bundle.  Therefore, by the Kodaira classification
    (cf.~\cite[Section 6A]{kodaira1}) $M^4$ is either a K3 surface or a 4-torus.
\end{proof}
If $M''\cong \tor^4$, then $P''(t) = 1 + 4t + 6t^2 + 4t^3 +t^4 $. If $M''$ is a
K3 surface then
$P''(t) = 1 +22t^2 +t^4$. Thus in both cases $P''(t) > 1+4t^2 + t^4$.
Therefore
\begin{equation*}
P(t) = P'(t) P''(t) > (1+t^7) (1+4t^2 + t^7) =
P(t),
\end{equation*}
which gives a contradiction to our assumption $M_{1,1}\cong M'\times M''$.
So we finally proved
\begin{theorem}\label{main}
There exist an $11$-dimensional compact 3-quasi-Sasakian manifold of rank $7$ which is
not a
global product of a $7$-dimensional 3-Sasakian manifold and a $4$-dimensional
hyper-K\"ahler manifold.
\end{theorem}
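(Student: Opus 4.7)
My plan is to take $M_{1,1} = (S^7 \times \tor^4)/\Z_4$ from the previous section as the explicit example and rule out a global product decomposition via cohomological obstructions. Since the general construction already endows $M_{1,1}$ with a 3-quasi-Sasakian structure of rank $7$ and $\dim M_{1,1} = 11$, the entire burden lies in showing that $M_{1,1}\not\cong M'\times M''$ whenever $M'$ is compact, $7$-dimensional, 3-Sasakian and $M''$ is compact, $4$-dimensional, hyper-K\"ahler. The most effective invariant to use is the Poincar\'e polynomial $P(t)$, because Theorem~\ref{kodaira} forces the potential $M''$ factors into a very short list.

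The main computation is to determine $P(t)$. First I would exploit the freeness of the $\Z_4$-action to invoke Hodge theory and identify $H^*(M_{1,1})$ with the $\Z_4$-invariants of $H^*(S^7\times \tor^4)$; K\"unneth then reduces the problem to computing $\Z_4$-invariants on each tensor factor $H^q(S^7)\otimes H^p(\tor^4)$. The isometry $f$ on $S^7$ is orientation preserving, so $\Z_4$ acts trivially on $H^0(S^7)$ and $H^7(S^7)$, while $H^k(S^7)=0$ in intermediate degrees. Consequently $b_k(M_{1,1})$ equals the dimension $\tilde{b}_k$ of the $h^*$-invariants on $\Lambda^k H^1(\tor^4)$ for $0\le k \le 4$, repeated in degrees $7$ through $11$ by Poincar\'e duality on $\tor^4$, and zero in the middle.

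To compute the $\tilde{b}_k$ I would write $h^*$ as an explicit $4\times 4$ matrix in the basis of coordinate $1$-forms dual to $\qq,\qi,\qj,\qk$; its complex eigenvalues are $\pm i$, so no invariant line exists, giving $\tilde{b}_1=0$ and hence $\tilde{b}_3=0$. Picking a symmetric/antisymmetric eigenbasis on $\Lambda^2 H^1(\tor^4)$ diagonalizes the induced action with eigenvalues $\pm 1$, and counting $+1$-eigenvectors yields $\tilde{b}_2 = 4$. Substituting into \eqref{bettis} produces
\begin{equation*}
P(t) = 1 + 4t^2 + t^4 + t^7 + 4t^9 + t^{11} = (1+t^7)(1+4t^2+t^4).
\end{equation*}

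Finally I would argue by contradiction. If $M_{1,1}\cong M'\times M''$, K\"unneth gives $P(t)=P'(t)P''(t)$. Compactness and orientability of $M'$ give $P'(t)\ge 1+t^7$ in the coefficient-wise partial order, and by Theorem~\ref{kodaira} we have $M''\cong \tor^4$ or a K3 surface, so $P''(t)$ equals either $(1+t)^4$ or $1+22t^2+t^4$; in both cases $P''(t) > 1+4t^2+t^4$. Multiplying these inequalities (using that multiplication by a nonzero polynomial with nonnegative coefficients preserves strict coefficient-wise inequalities) yields $P(t) > (1+t^7)(1+4t^2+t^4) = P(t)$, a contradiction. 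I expect the main obstacle to be choosing the eigenbasis on $\Lambda^2 H^1(\tor^4)$ cleanly enough to read off $\tilde{b}_2 = 4$ without a tedious case analysis; once that computation is in hand, everything else is routine bookkeeping with K\"unneth and the Kodaira dichotomy.
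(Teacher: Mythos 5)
Your proposal is correct and follows essentially the same route as the paper: identify $H^*(M_{1,1})$ with the $\Z_4$-invariants via Hodge theory and K\"unneth, compute $\tilde{b}_1=0$, $\tilde{b}_2=4$ from the explicit matrix of $h^*$, obtain $P(t)=(1+t^7)(1+4t^2+t^4)$, and contradict a product decomposition using Theorem~\ref{kodaira} and coefficient-wise inequalities of Poincar\'e polynomials. The only cosmetic difference is that you deduce $\tilde{b}_3=\tilde{b}_1$ from duality on $\tor^4$ rather than from Poincar\'e duality of $M_{1,1}$, which is equally valid.
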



\end{document}